\newtheorem{theorem}{Theorem}[section]
\newtheorem{teo}{Theorem}
\newtheorem{definition}[theorem]{Definition}
\newtheorem{corollary}[theorem]{Corollary}
\newtheorem{example}[theorem]{Example}
\newcommand{\C}{\mathbb{C}}
\DeclareMathOperator{\IIm}{Im}
\newcommand{\HH}{\mathbb{H}}
\DeclareMathOperator{\RRe}{Re}
\newcommand{\hh}{{\mathbb{H}}}
\newcommand{\s}{{\mathbb{S}}}
\newcommand{\cc}{{\mathbb{C}}}
\newcommand{\rr}{{\mathbb{R}}}
\newcommand{\nn}{{\mathbb{N}}}
\newcommand{\zz}{{\mathbb{Z}}}
\newcommand{{\ee}}{{\`e}}
\newcommand{{\aA }}{{\`a}}
\newcommand{{\oo}}{{\`o}}
\newcommand{{\uu}}{{\`u}}
\newcommand{{\ii}}{{\`i}}
\begin{document}

\title{The Mittag-Leffler Theorem for regular functions of a quaternionic variable 
}

\author{Graziano Gentili  \and Giulia Sarfatti}


\author{Graziano Gentili \footnote{This project has been supported by G.N.S.A.G.A. of INdAM - Rome (Italy), by MIUR of the Italian Government (Research Projects: PRIN ``Real and complex manifolds: geometry, topology and harmonic analysis'', FIRB ``Geometric function theory and differential geometry" and SIR \textquotedblleft Analytic aspects in complex and hypercomplex geometry'').}\\
\normalsize Dipartimento di Matematica e Informatica ``U. Dini'', Universit\`a di Firenze \\
\normalsize Viale Morgagni 67/A, 50134 Firenze, Italy,  gentili@math.unifi.it \\
\and Giulia Sarfatti $^*$\\
\normalsize Dipartimento di Matematica e Informatica ``U. Dini'', Universit\`a di Firenze \\
\normalsize Viale Morgagni 67/A, 50134 Firenze, Italy,  giulia.sarfatti@unifi.it }
\date{  }
\date{}

\maketitle
\begin{abstract}
We prove a version of the classical Mittag-Leffler Theorem for regular functions over quaternions. Our result relies upon an appropriate notion of principal part, that is inspired by the recent definition of spherical analyticity. 
\vskip 0.5 cm
{\bf Mathematics Subject Classification (2010): } 30G35

{\bf Keywords:} Functions of a quaternionic variable, Mittag-Leffler Theorem, quaternionic analyticity.

\end{abstract}

\section{Introduction}

The class of (slice) regular functions of a quaternionic variable was introduced in \cite{CR}, \cite{GSAdvances}, and proved to be a good counterpart of the class of holomorphic functions, in the quaternionic setting.  Regular functions have nice new features, when compared with the classical quaternionic Fueter regular functions: for instance natural polynomials and power series are regular, and regular functions can be expanded in power series on special classes of domain in the space of quaternions $\HH$.

This theory is having a fast development in several directions, and is by now already well established; it has interesting applications to the construction of a noncommutative  functional calculus, \cite{librofc}, and to the classification of Orthogonal Complex Structures in subdomains of the space $\HH$, \cite{GenSalSto}. An exhaustive presentation of this theory can be found in \cite{libroGSS}. 

Many results that concern regular functions reflect the structure of their complex analogues, other are surprisingly different: for example the zero sets of regular functions (and the sets of poles of \emph{semiregular} functions) consist of isolated points and isolated 2-dimensional spheres.

One of the fundamental results in the theory of holomorphic functions is the celebrated Mittag-Leffler Theorem, that has been used in many different contexts, and in particular in that of sheaves of meromorphic functions. 

\begin{theorem}
Let $\Omega$ be an open subset of the complex plane $\C$, and let $A\subset \Omega$. Let us suppose that $A$ has no accumulation point in $\Omega$ and, for any $a\in A$, choose an integer  $m(a) \in \nn$ and a rational function 
\[
P_a(z)=\sum_{j=1}^{m(a)}(z-a)^{-j}c_{j,a}.
\]
Then there exists a meromorphic function $f: \Omega \to \C$, whose principal part at every $a\in A$ is $P_a$, having no other pole in $\Omega$.
\end{theorem}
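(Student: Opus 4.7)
The strategy is the standard Runge-theorem argument. Since $A$ has no accumulation point in $\Omega$, the set $A$ is at most countable, and its points can be grouped via a compact exhaustion of $\Omega$ in such a way that $f$ is realized as a series whose terms encode the prescribed singular behavior up to a holomorphic correction.

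First I would fix an exhaustion of $\Omega$ by compact sets $K_n \subset \mathrm{int}(K_{n+1})$ with $\bigcup_n K_n = \Omega$, chosen so that every connected component of $\C \setminus K_n$ meets $\C \setminus \Omega$ (the standard choice $K_n = \{z \in \C : |z|\le n,\ \mathrm{dist}(z,\C\setminus\Omega) \ge 1/n\}$ works, with the convention $\mathrm{dist}(z,\emptyset) = +\infty$ if $\Omega = \C$). Because $A$ has no accumulation in $\Omega$, each set $A_n := A \cap (K_n \setminus K_{n-1})$ is finite (with $K_0 = \emptyset$), and the rational function
\[
Q_n(z) := \sum_{a \in A_n} P_a(z)
\]
has poles only at the points of $A_n \subset K_n \setminus K_{n-1}$, hence is holomorphic on an open neighborhood of $K_{n-1}$.

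Next I would invoke Runge's theorem: by the choice of exhaustion, $Q_n$ can be uniformly approximated on $K_{n-1}$ by a function $R_n$ holomorphic on $\Omega$, with $\sup_{K_{n-1}}|Q_n - R_n| \le 2^{-n}$. Then set
\[
f(z) := Q_1(z) + \sum_{n=2}^{\infty} \bigl( Q_n(z) - R_n(z) \bigr).
\]
On any compact $K \subset \Omega \setminus A$ one has $K \subset K_{n_0-1}$ for some $n_0$; the finitely many initial terms are meromorphic on $\Omega$ with the required principal parts, while the tail $\sum_{n \ge n_0}(Q_n - R_n)$ is dominated by $\sum 2^{-n}$ uniformly on $K$ and therefore sums to a holomorphic function there. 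The resulting $f$ is meromorphic on $\Omega$, its principal part at each $a \in A$ is $P_a$, and it has no other poles.

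The main obstacle is the Runge approximation step: one must guarantee that every connected component of $\C \setminus K_n$ contains a point of $\C \setminus \Omega$, since this is the precise topological hypothesis under which rational functions with poles in $K_n$ can be approximated uniformly on $K_{n-1}$ by functions holomorphic on all of $\Omega$. Setting up the exhaustion with this property, and invoking a Runge-type approximation theorem, is the technical heart of the argument; it is exactly this step whose transposition to the quaternionic setting requires the subtler notion of principal part alluded to in the abstract.
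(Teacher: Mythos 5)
Your argument is correct and is precisely the classical Runge-exhaustion proof (Rudin, Theorem 13.3) that the paper takes as its model: the same decomposition $A_n = A\cap(K_n\setminus K_{n-1})$, the same correction terms $R_n$ from Runge's theorem, and the same telescoped series $Q_1+\sum_{n\ge 2}(Q_n-R_n)$ appear verbatim in the paper's proof of the quaternionic analogue. No gaps; your closing remark correctly identifies the Runge step and the choice of principal part as the points requiring care in the quaternionic transposition.
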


The search for an analogous result for regular functions, connected with the under-construction theory of sheaves of regular  and  semi regular functions, \cite{sheaves}, inspired this work. Since in the new environment of regular functions there are several, non equivalent  notions of analyticity, \cite{powerseries}, \cite{spherical}, an important step is the choice of the ``right'' notion of principal part.  We adopt here the approach suggested by spherical series, \cite{spherical}, which, together with the quaternionic version of the Runge Theorem, \cite{runge}, leads to the aimed result.

\section{On quaternionic analyticity}
With the usual notations, let $\HH=\rr+\rr i+\rr j+\rr k$ denote the four dimensional non-commutative real algebra of quaternions. For any $q=x_0+x_1i+x_2j+x_3k \in \HH$ let $\RRe(q)=x_0$ and $\IIm(q)=x_1i+x_2j+x_3k $ denote its real and imaginary parts and let $|q|=\sqrt{x_0^2+x_1^2+x_2^2+x_3^2}$ be its modulus. 
The definition of regular function is given in terms of the elements of the $2$-sphere $\s = \{q \in \hh : q^2 = -1\}$ of \emph{quaternion imaginary units}.
\begin{definition}
Let $\Omega$ be a domain in $\hh$ and let $f : \Omega \to \hh$ be a function. For all $I \in \s$, let us denote $L_I = \rr + I \rr$, $\Omega_I = \Omega \cap L_I$ and $f_I = f_{|_{\Omega_I}}$. 
The function $f$ is called \emph{(slice) regular} if, for all $I \in \s$, the restriction $f_I$ is holomorphic, i.e. the function $\bar \partial_I f : \Omega_I \to \hh$ defined by
$$
\bar \partial_I f (x+Iy) = \frac{1}{2} \left( \frac{\partial}{\partial x}+I\frac{\partial}{\partial y} \right) f_I (x+Iy)
$$
vanishes identically.
\end{definition}

One of the reasons of the immediate interest for regular functions stays in the fact that  an analog of Abel's Theorem holds: any power series  
$$f(q) = \sum_{n \in \nn} q^n a_n$$ 
with quaternionic coefficients $\{a_n\}$ defines a regular function on its ball of convergence 
$B(0,R) = \{q \in \hh : |q| <R\}$.
The set of such series inherits the classical multiplication $*$ defined for quaternionic polynomials (or, more in general, for polynomials with coefficients in a noncommutative ring):
\begin{equation}\label{product}
\left(\sum_{n \in \nn} q^n a_n \right)*\left( \sum_{n \in \nn} q^n b_n \right) = \sum_{n \in \nn} q^n \sum_{k = 0}^n a_k b_{n-k}.
\end{equation}
Let now $(q-q_0)^{*n} = (q-q_0)*\ldots*(q-q_0)$ denote the $*$-product of $n$ copies of $q \mapsto q-q_0$. In \cite{powerseries}  series of the form
\begin{equation}\label{regularseries}
f(q) = \sum_{n \in \nn} (q-q_0)^{*n} a_n
\end{equation}
are studied, whose sets of convergence are balls with respect to the distance $\sigma : \hh \times \hh \to \rr$ defined in the following fashion.

\begin{definition}
For all $p,q \in \hh$, we set
\begin{equation}
\sigma(q,p) = \left\{
\begin{array}{ll}
|q-p| & \mathrm{if\ } p,q \mathrm{\ lie\ on\ the\ same\ complex\ plane\ } \rr+I\rr\\
\omega(q,p) &  \mathrm{otherwise}
\end{array}
\right.
\end{equation}
where
\begin{equation}
\omega(q,p) = \sqrt{\left[Re(q)-Re(p)\right]^2 + \left[|Im(q)| + |Im(p)|\right]^2}. 
\end{equation}
\end{definition}
\noindent A new notion of analyticity can be given in tems of the distance $\sigma$:

\begin{definition}\label{defisigmaanaliticita}
If $\Omega$ is a domain in $\hh$, a function $f : \Omega \to \hh$ is called \emph{$\sigma$-analytic} if it admits at every $q_0 \in \Omega$ an expansion of type \eqref{regularseries} that is valid in a $\sigma$-ball $\Sigma(q_0,R) = \{q \in \hh : \sigma(q,q_0) <R\}$ of positive radius $R$.
\end{definition}

\noindent Regularity and $\sigma$-analyticity turn out to be the same notion, as it appears in the following result proved in \cite{powerseries}.

\begin{theorem}\label{sigmaanaliticita}
If $\Omega$ is a domain in $\hh$, a function $f : \Omega \to \hh$ is regular if and only if it is \emph{$\sigma$-analytic}.
\end{theorem}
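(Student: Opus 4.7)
The plan is to prove both implications by using the local nature of slice regularity together with two structural tools from the theory: the Splitting Lemma (on a slice $L_I$, a regular function $f_I$ can be written as $F + Gj$ for holomorphic functions $F, G : \Omega_I \to L_I$ and any $j \in \s$ perpendicular to $I$) and the Representation Formula (which determines the values of a regular function on every slice from its values on a single one).

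For the implication ``$\sigma$-analytic implies regular'', since $\bar\partial_I f = 0$ is a slice-local condition it is enough to verify that any $*$-series $\sum_{n\in\nn}(q-q_0)^{*n} a_n$ defines a regular function on its $\sigma$-ball of convergence. On the slice $L_I$ containing $q_0$ the $*$-product reduces to the ordinary non-commutative pointwise product, so $f_I$ is a standard $\hh$-valued holomorphic power series. On any other slice $L_J$, decompose each partial sum via the Splitting Lemma as $F_n + G_n j$ with $F_n, G_n$ holomorphic on $\Omega_I$; passing to the limit and reassembling on $L_J$ then yields $\bar\partial_J f_J = 0$.

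For the converse, fix $q_0 \in \Omega$. If $q_0 \in \rr$, every $q \in \hh$ shares some slice with $q_0$, so $\sigma(q,q_0) = |q-q_0|$ throughout $\hh$; the classical expansion $f(q) = \sum_n (q - q_0)^n a_n$ obtained by iterating the derivative $\tfrac{1}{2}(\partial_x - I\partial_y)f_I$ at $q_0$ converges on a Euclidean, and hence $\sigma$-, ball. If instead $q_0 = x_0 + I y_0 \notin \rr$, apply the Splitting Lemma to $f_I$ on $\Omega_I$, Taylor-expand the two holomorphic pieces around $q_0$ in $L_I$, and recombine their coefficients to produce quaternions $a_n$ and an identity $f_I(q) = \sum_n (q-q_0)^{*n} a_n$ on a disc of $L_I$ centered at $q_0$. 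Extend this identity from $\Omega_I$ to an axially symmetric neighborhood of $q_0$ in $\hh$ by means of the Representation Formula.

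The main obstacle is identifying the resulting domain of validity as precisely a $\sigma$-ball $\Sigma(q_0,R)$. The admissible on-slice radius is controlled by the distances of both $q_0$ and $\bar q_0 = x_0 - I y_0$ to $\partial \Omega_I$, because the Representation Formula combines values of $f$ at $x + Iy$ and at $x - Iy$ in $L_I$ to produce the value at $x + Jy$ on any transverse slice $L_J$. The off-slice convergence therefore requires a careful estimate of $\|(q-q_0)^{*n}\|^{1/n}$ for $q \in L_J\setminus L_I$: a direct computation shows that this asymptotic growth is governed by $\omega(q,q_0) = \sqrt{(\RRe q - x_0)^2 + (|\IIm q| + y_0)^2}$, rather than by $|q-q_0|$, which matches exactly the non-slice branch in the definition of $\sigma$ and so pins the domain of convergence down to $\Sigma(q_0,R)$.
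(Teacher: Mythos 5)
A preliminary remark: the paper does not prove this theorem, it quotes it from \cite{powerseries}; so what follows assesses your argument on its merits. The sound parts first. For ``$\sigma$-analytic $\Rightarrow$ regular'' it suffices to use, at each $p\in\Omega$, the expansion centered at $p$ itself: if $p\in L_J$, then $\Sigma(p,R)$ contains a disc of $L_J$ around $p$, and on that disc $(q-p)^{*n}$ coincides with $(z-p)^n$ for the commuting variables $z,p\in L_J$, so $f_J$ is a convergent power series with right quaternionic coefficients and $\bar\partial_J f=0$ near $p$; your detour through the behaviour of one fixed series on \emph{other} slices is unnecessary (and would require the Abel-type convergence estimates you only sketch). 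For the converse at a non-real $q_0=x_0+Iy_0$, splitting $f_I=F+Gj$ and Taylor-expanding the two holomorphic pieces does give $f_I(z)=\sum_n(z-q_0)^n a_n$ on a disc of $L_I$, which, by the same identification of $(q-q_0)^{*n}$ with $(z-q_0)^n$ on $L_I$, is already an expansion of type \eqref{regularseries} there. At real centers the Euclidean-ball power series expansion is the standard quaternionic Abel/Taylor theorem, as you say.

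The genuine gap is the final step of the converse, where you propose to ``extend this identity to an axially symmetric neighborhood of $q_0$ by means of the Representation Formula'' and then to pin down the domain by estimating $|(q-q_0)^{*n}|^{1/n}$. First, the Representation Formula is not available: the theorem is stated for an arbitrary domain $\Omega$, not a symmetric slice domain, and this weak hypothesis is precisely why $\sigma$-analyticity (rather than symmetric analyticity) is the right notion here. Second, the conclusion you are aiming at is false in general: the paper's own example, $\sum_n (q-\tfrac34 i)^n a_n$ built from a holomorphic function with natural boundary $\partial\Delta$, converges on no Euclidean neighborhood of $\tfrac34 i$, so no choice of coefficients can make the expansion valid on an axially symmetric neighborhood of a non-real center. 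The resolution is that no off-slice extension is required: by Definition \ref{defisigmaanaliticita} one only needs validity in \emph{some} $\sigma$-ball of positive radius, and for $R<y_0$ the ball $\Sigma(q_0,R)$ is exactly the $2$-dimensional disc $\{z\in L_I:|z-q_0|<R\}$ on which you have already established the identity. The $\omega$-asymptotics of $|(q-q_0)^{*n}|^{1/n}$ do govern the \emph{maximal} sets of convergence of $*$-series (that is the Abel-type theorem of \cite{powerseries}), but they are not needed for the equivalence stated here. Delete the extension step, replace it with the observation about small $\sigma$-balls, and your proof closes.
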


The meaning of Theorem \ref{sigmaanaliticita} is not as strong as in the complex case, since $\sigma$-analyticity has not the features one may imagine at a first glance. In fact the topology induced by the distance $\sigma$ is finer than the Euclidean: if $q_0 = x_0+Iy_0$ does not lie on the real axis then for $R<y_0$ the $\sigma$-ball $\Sigma(q_0,R)$ reduces to a ($2$-dimensional) disc $\{z \in L_I : |z-q_0|<R\}$ in the complex plane $L_I$ through $q_0$ (see \cite{powerseries} for a presentation of the shape of $\sigma$-balls). Hence the  behavior of $f$ in a Euclidean neighborhood of $q_0$ cannot be envisaged by the series expansion \eqref{regularseries}, which, in general, will only represent $f$ along the complex plane $L_I$ containing $q_0$. 
%
To understand this phenomenon we will present what we believe to be a meaningful example (see e.g. \cite{libroGSS})

\begin{example}
Let $\Delta$ be the open unit disc centered at the origin of $L_i=\rr+i\rr=\cc$ and let $f:\Delta \to \cc$, $f(z)= \sum_{n \in \nn} z^n a_n$ be a holomorphic function whose maximal domain of definition is $\Delta$. Then the power series 
$$
f(q) = \sum_{n \in \nn} (q-\frac34i)^n a_n
$$
does not converge on a Euclidean neighborhood of $\frac34i$ but only in a $2$-dimensional disc of $\cc$ containing $\frac34i$.
\end{example}

As explained in  \cite{advancesrevisited}, the situation is much better if the domain $\Omega$ is carefully chosen. Consider the following class of domains:

\begin{definition}
Let $\Omega$ be a domain in $\hh$. If 
$$\Omega = \bigcup_{x+Iy\in \Omega} x+y\s$$
then $\Omega$ is called an \emph{(axially) symmetric domain}. If the domain $\Omega$  intersects the real axis and is such that for all $I \in \s$, $\Omega_I = \Omega \cap L_I$ is a domain in $L_I \simeq \cc$ then $\Omega$ is called a \emph{slice domain}.
\end{definition}

%
%

\noindent Regular functions $f$ on symmetric slice domains are affine when restricted to a single $2$-sphere $x+y\s$ (see, e.g., \cite[Theorem 3.2]{advancesrevisited}, \cite[Theorem 1.10]{spherical}).
As a consequence, if $f$ is a regular function on a symmetric slice domain then its values can all be recovered from those of one of its restrictions $f_I$.  This last fact leads to the definition of a stronger form of analyticity than the one presented in Theorem \ref{sigmaanaliticita}, which is related to a different type of series expansion valid in Euclidean open sets. If we denote as $R_{q_0}f : \Omega \to \hh$ the function such that
$$f(q) = f(q_0) + (q-q_0)*R_{q_0}f(q),$$ then the following result holds (see \cite[Theorem 4.1]{spherical}).


\begin{theorem}
Let $f$ be a regular function on a symmetric slice domain $\Omega$, and let $x_0,y_0 \in \rr$ and $R >0$ be such that $$U(x_0+y_0\s,R)=\{q \in \hh : |(q-x_0)^2+y_0^2| < R^2\} \subseteq \Omega.$$ For all $q_0 \in x_0+y_0\s$, setting 
$$A_{2n} =  (R_{\bar q_0}R_{q_0})^{n}f(q_0)$$ and $$A_{2n+1} = R_{q_0}(R_{\bar q_0}R_{q_0})^{n}f(\bar q_0),$$ we have that
\begin{equation}\label{expansion2}
f(q) = \sum_{n \in \nn}[(q-x_0)^2+y_0^2]^n [A_{2n} + (q-q_0)A_{2n+1}]
\end{equation}
for all $q \in U(x_0+y_0\s,R)$.
\end{theorem}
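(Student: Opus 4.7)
The plan is to iteratively apply the defining relation $g(q) = g(p) + (q-p) * R_p g(q)$, alternating between the points $q_0$ and $\bar q_0$. Applying it first at $q_0$ and then at $\bar q_0$ to $R_{q_0}f$ (which is itself slice regular on $\Omega$) yields
$$f(q) = A_0 + (q-q_0) A_1 + (q-q_0)*(q-\bar q_0)* R_{\bar q_0}R_{q_0}f(q).$$
The key algebraic fact is that $(q-q_0)*(q-\bar q_0) = q^2 - 2x_0 q + (x_0^2 + y_0^2) = (q-x_0)^2 + y_0^2$ is a real slice polynomial, hence lies in the centre of the $*$-product and reduces $*$-multiplication to ordinary multiplication by this factor. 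By induction on $n$ this gives the partial-sum identity
$$f(q) = \sum_{k=0}^{n-1} [(q-x_0)^2 + y_0^2]^k [A_{2k} + (q-q_0)A_{2k+1}] + [(q-x_0)^2 + y_0^2]^n (R_{\bar q_0} R_{q_0})^n f(q),$$
valid on all of $\Omega$; the theorem will then follow once the remainder is shown to vanish on $U(x_0+y_0\s,R)$ as $n \to \infty$.

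For the convergence step I would restrict to the slice $L_{I_0}$ containing $q_0$. There $R_{q_0}$ coincides with the classical Newton difference quotient $g \mapsto (z-q_0)^{-1}(g(z)-g(q_0))$, and by induction one obtains the standard Hermite-type Cauchy representation
$$(R_{\bar q_0} R_{q_0})^n f(z) = \frac{1}{2\pi I_0} \oint_\Gamma \frac{f(\zeta)}{(\zeta-q_0)^n(\zeta-\bar q_0)^n(\zeta-z)}\, d\zeta$$
for any rectifiable contour $\Gamma \subset \Omega_{I_0}$ enclosing $q_0$, $\bar q_0$ and $z$. Since $U(x_0+y_0\s,R)\cap L_{I_0}$ is the Cassini-type region $\{z \in L_{I_0} : |(z-q_0)(z-\bar q_0)| < R^2\}$, for each $z$ in this region I would pick $r$ with $|(z-q_0)(z-\bar q_0)| < r^2 < R^2$ and take $\Gamma$ to be the level set $\{|(\zeta-q_0)(\zeta-\bar q_0)| = r^2\}$. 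The remainder then becomes
$$\frac{1}{2\pi I_0} \oint_\Gamma \frac{f(\zeta)}{\zeta - z} \left[\frac{(z-q_0)(z-\bar q_0)}{(\zeta-q_0)(\zeta-\bar q_0)}\right]^n d\zeta,$$
whose modulus is bounded by a constant times $(|(z-q_0)(z-\bar q_0)|/r^2)^n$, and therefore tends to zero uniformly on compact subsets of $U\cap L_{I_0}$.

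Finally, the partial sums and $f$ are all slice regular on the symmetric slice domain $U(x_0+y_0\s,R)$, and by the previous step the series converges uniformly on compacta of $U\cap L_{I_0}$ to $f_{I_0}$; the identity principle on symmetric slice domains (equivalently, the representation formula recovering values on an arbitrary slice from those on $L_{I_0}$) then forces convergence to $f$ on all of $U(x_0+y_0\s,R)$. The step I expect to be most delicate is the geometric one: verifying that the Cassini level curve $\{|(\zeta-q_0)(\zeta-\bar q_0)| = r^2\}$ — which is topologically two loops when $r<y_0$ and one when $r>y_0$ — is a legitimate contour enclosing $z$, $q_0$ and $\bar q_0$ in the sense required by the Hermite-type Cauchy formula, and that it lies inside $\Omega_{I_0}$. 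Once this is confirmed, the rest is a direct transcription of the classical Taylor-with-repeated-nodes argument in complex analysis.
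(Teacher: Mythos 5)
The paper does not actually prove this statement: it is quoted verbatim from \cite[Theorem 4.1]{spherical}, so there is no internal proof to compare against. Your reconstruction is, in substance, the argument of that reference, and it is correct: iterate the division identity at the conjugate nodes $q_0,\bar q_0$, use that $(q-q_0)*(q-\bar q_0)=(q-x_0)^2+y_0^2$ has real coefficients (hence $*$-multiplication by its powers is ordinary pointwise multiplication), and kill the remainder by a divided-difference Cauchy estimate on one slice followed by the representation formula. Three points should be made honest. (i) The parenthetical claim that $R_{q_0}f$ is regular on all of $\Omega$ is not automatic: $(q-q_0)^{-*}$ has poles along the entire sphere $x_0+y_0\s$, and their removability rests on the factorization lemma (a regular function on a symmetric slice domain vanishing at $q_0$ equals $(q-q_0)*h$ with $h$ regular on all of $\Omega$); this is precisely where the symmetric slice hypothesis is used, and the same lemma justifies $[(q-q_0)*h]_{|L_{I_0}}(z)=(z-q_0)h(z)$, i.e.\ your identification of $R_{q_0}$ with the Newton quotient on the slice. (ii) Since $f$ is $\HH$-valued, the Cauchy kernel must be kept consistently on one side of $f(\zeta)$; the fraction notation obscures this, but the splitting $f_{I_0}=F_1+F_2J$ with $F_1,F_2$ holomorphic and $L_{I_0}$-valued reduces everything to the classical scalar case, so the Hermite representation and the resulting bound survive. (iii) The contour issue you flag is resolved by taking $\Gamma$ to be the positively oriented boundary of the open sublevel set $\{\zeta\in L_{I_0}: |(\zeta-q_0)(\zeta-\bar q_0)|<r^2\}$, which is compactly contained in $\Omega_{I_0}$ because $U(x_0+y_0\s,R)\cap L_{I_0}$ is exactly the $R$-sublevel set; whether it has one or two components, the Cauchy formula for the boundary of an open set applies simultaneously to $z$, $q_0$ and $\bar q_0$, all of which lie inside, and your induction goes through. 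With these points supplied, the proof is complete and matches the one in \cite{spherical}.
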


\noindent Here is the announced notion of analyticity, \cite{spherical}.

\begin{definition}
Let $f$ be a regular function on a symmetric slice domain $\Omega$. We say that  $f$ is \emph{symmetrically analytic} if it admits at any $q_0 \in \Omega$ an expansion of type \eqref{expansion2} valid in a Euclidean neighborhood of $q_0$.
\end{definition}

\noindent Thanks to the previous theorem, we obtain:

\begin{corollary}
Let $\Omega$ be a symmetric slice domain. A function $f: \Omega \to \hh$ is regular if, and only if, it is symmetrically analytic.
\end{corollary}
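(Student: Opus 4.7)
The plan is to split the ``if and only if'' into its two halves and reduce each to the theorem immediately preceding it. The ``only if'' direction is essentially a repackaging of that theorem, while the ``if'' direction reduces to the fact that each partial sum of \eqref{expansion2} is a regular polynomial.

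For the ``only if'' implication, fix $q_0 \in \Omega$ and write $q_0 = x_0 + y_0 I_0$ with $I_0 \in \s$, $y_0 \geq 0$. Axial symmetry of $\Omega$ gives $x_0+y_0\s \subseteq \Omega$, and since this sphere is compact and $\Omega$ is Euclidean-open, I would argue by compactness that there exists $R>0$ for which $U(x_0+y_0\s,R)\subseteq \Omega$. The set $U(x_0+y_0\s,R)$ is Euclidean-open (as the preimage of an open set under a continuous map) and contains $q_0$, so applying the preceding theorem produces the expansion \eqref{expansion2} in a Euclidean neighborhood of $q_0$.

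For the converse, suppose $f$ is symmetrically analytic. Since slice regularity is a local property, it suffices to show that $f$ is regular in a Euclidean neighborhood of an arbitrary $q_0 = x_0+y_0 I_0 \in \Omega$. The crucial observation is that the generic summand
$$[(q-x_0)^2+y_0^2]^n\bigl[A_{2n}+(q-q_0)A_{2n+1}\bigr]$$
of \eqref{expansion2} is a quaternionic polynomial in $q$, hence regular: the factor $(q-x_0)^2+y_0^2$ has real coefficients and so commutes with every quaternion, making pointwise multiplication by it coincide with $*$-multiplication. The series thus exhibits $f$ locally as a uniform limit on compact subsets of regular polynomials, and such limits are regular (slice by slice this reduces to the classical fact about uniform limits of holomorphic functions).

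The only mildly delicate point is the compactness step in the forward direction: one must verify that the sets $U(x_0+y_0\s,R)$ form a neighborhood basis of $x_0+y_0\s$ in the Euclidean topology as $R\to 0^+$, i.e.\ that every Euclidean open set containing $x_0+y_0\s$ contains some $U(x_0+y_0\s,R)$. A direct computation of $|(q-x_0)^2+y_0^2|$ in the coordinates $q=x+yJ$ suffices, showing that points with this modulus small are forced close to the sphere $x_0+y_0\s$. Every other ingredient is immediately available from the preceding theorem and the basic calculus of regular polynomials.
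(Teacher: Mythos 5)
Your proposal is correct and follows the route the paper intends: the paper derives this corollary directly from the preceding theorem (which gives the expansion \eqref{expansion2} once one checks, as you do, that some $U(x_0+y_0\s,R)$ fits inside $\Omega$), and the converse is exactly your observation that each summand of \eqref{expansion2} is a regular polynomial (the factor $[(q-x_0)^2+y_0^2]^n$ having real coefficients) so that the locally uniformly convergent series defines a regular function. Both the compactness step and the neighborhood-basis property of the sets $U(x_0+y_0\s,R)$ are verified as you indicate, so no gap remains.
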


\section{Principal part of a semiregular function}

\begin{definition}\index{singularity}
Let $f$ be a regular function on a symmetric slice domain $\Omega$. We say that a point $p =x+yI_p \in \hh$ is a \emph{singularity} for $f$ if 
$f_{I_p}:\Omega_{I_p} \to\HH$ has a singularity at $p$. In other words if there exists $R>0$ such that $f$ has the Laurent expansion $f(z) = \sum_{n \in \zz} (z-p)^{n}a_n$  converging for any $z\in L_{I_p}$ with $0<|z|<R$.
\end{definition}

\noindent As proven in \cite{caterina}, if $p=x+yI_p$ is a singularity for a regular function $f$, then $f$ admits a {\em regular Laurent expansion} 
\begin{equation}
f(q) = \sum_{n \in \zz} (q-p)^{*n}a_n,
\end{equation}
converging in $\Sigma(p, R)\setminus \{x+y\s\}$, whose restriction to $L_{I_p}$ coincides with the Laurent expansion of $f_{I_p}$ at $p$. It is clear that, as it happens for regular power series of type \eqref{regularseries}, the domains of convergence of regular Laurent series are not always open sets.  Non-essential singularities are defined as follows.

\begin{definition}\index{singularity!essential}\index{singularity!pole}\index{singularity!removable}\index{order of a pole}
Let $p$ be a singularity for $f$. We say that $p$ is a \emph{removable singularity} if $f$ extends to a neighborhood of $p$ as a regular function. Otherwise consider the expansion
\begin{equation}\label{classification}
f(q) = \sum_{n \in \zz} (q-p)^{*n}a_n.
\end{equation}
We say that $p$ is a \emph{pole} for $f$ if there exists an $m\geq0$ such that $a_{-k} = 0$ for all $k>m$.
\end{definition}
We can now recall the notion of semiregular function, analogue to that of meromorphic function in the complex setting.  
\begin{definition}\index{semiregular function}
A function $f$ is  \emph{semiregular} in a symmetric slice domain $\Omega$ if it is regular in a symmetric slice domain $\Omega'\subseteq\Omega$ such that every point of $\mathcal{S} = \Omega\setminus \Omega'$ is a pole (or a removable singularity) for $f$.
\end{definition}
\noindent If $f$ is semiregular in $\Omega$ then the set $\mathcal{S}$ of its nonremovable poles consists of isolated real points or isolated 2-spheres of type $x+y\s$.

The following result shows how we can ``extract'' a pole from a semiregular function, see \cite{caterina}.
\begin{teo}
Let $f:\Omega\to \HH$ be a semiregular function on a symmetric slice domain with a pole at $x_0+y_0\s \subset \Omega$. Then there exist $k\in \nn$ and a unique semiregular function $g$ on $\Omega$, regular on a symmetric slice neighborhood of $x_0+y_0\s$, such that
\[f(q)=((q-x_0)^2+y_0^2)^{-k}g(q).\]
\end{teo}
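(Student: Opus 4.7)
The plan is to cancel the pole on the sphere $x_0+y_0\s$ by multiplying $f$ by a sufficient power of the slice-preserving regular polynomial $h(q)=(q-x_0)^2+y_0^2$, whose zero set is exactly $x_0+y_0\s$. First, fix $I\in\s$ and set $p=x_0+y_0I$. A direct computation shows that the two $*$-products $(q-p)*(q-\bar p)$ and $(q-\bar p)*(q-p)$ both equal $q^2-2x_0q+(x_0^2+y_0^2)=h(q)$, so the factors $*$-commute; moreover, since $h$ is real on every slice, its pointwise product with any regular function coincides with the $*$-product, and $h^{*k}=(q-p)^{*k}*(q-\bar p)^{*k}$.

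Since $p$ is a pole of $f$, let $m\geq 1$ be its order, so that the regular Laurent expansion at $p$ is $f(q)=\sum_{n\geq -m}(q-p)^{*n}a_n$, converging on $\Sigma(p,R)\setminus\{x_0+y_0\s\}$. I take $k=m$. Using the $*$-commutativity above together with $(q-p)^{*k}*(q-p)^{*n}=(q-p)^{*(k+n)}$ (valid off $x_0+y_0\s$, where $*$-inverses exist), term-by-term multiplication produces
\[
h(q)^{k}*f(q)=\sum_{n\geq-m}(q-\bar p)^{*k}*(q-p)^{*(k+n)}a_n,
\]
a convergent regular power series with non-negative exponents on both sides. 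Hence $g:=h^k f$, initially defined on $\Omega$ minus the pole set, extends regularly to a $\sigma$-neighborhood of $p$.

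It remains to upgrade this to regularity on a symmetric slice neighborhood of the whole sphere. Since $h^k$ is regular on $\hh$ and $f$ is semiregular on $\Omega$, the product $g$ is semiregular on $\Omega$; its nonremovable poles therefore form a union of isolated real points and isolated $2$-spheres. Because $p$ is now a regular point of $g$, the affine behaviour of regular functions on the sphere $x_0+y_0\s$ (which is the content of the structural results recalled in Section~2) forces the entire sphere to be pole-free, so $g$ is regular on a symmetric slice neighborhood of $x_0+y_0\s$. Uniqueness of $g$ given $k$ is immediate: any other candidate $\tilde g$ must satisfy $g=\tilde g$ on $\Omega\setminus(x_0+y_0\s)$ by the defining identity $f=h^{-k}\tilde g$, and continuity of regular functions extends the agreement to the whole neighborhood. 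The step I expect to be most delicate is the term-by-term manipulation of the Laurent series—specifically justifying the $*$-commutation of $(q-\bar p)^{*k}$ with the negative-index factors $(q-p)^{*(-j)}$—which I plan to handle by first clearing the finitely many negative terms (so only genuine $*$-polynomials and $*$-power series appear), then invoking the uniqueness of the regular power series expansion.
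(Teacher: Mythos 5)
The paper itself does not prove this theorem---it is quoted from \cite{caterina}---so your argument can only be judged on its own merits. The core idea (clear the spherical pole by multiplying by a power of the slice-preserving polynomial $h(q)=(q-x_0)^2+y_0^2$, using $h*f=f*h=hf$ and the factorization $h=(q-p)*(q-\bar p)$) is the right one, and the uniqueness argument is fine. A small misstatement first: $h$ is not \emph{real} on every slice, it \emph{preserves} every slice ($h(L_I)\subseteq L_I$), which is what the identification of the $*$-product with the pointwise product actually requires.

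There is, however, a genuine gap in the two steps that pass from the single point $p=x_0+y_0I$ to the whole sphere. First, the exponent $k=m$ read off from the Laurent expansion at the one chosen point can be too small: the order of the pole of $f_I$ at $p$ need not agree with its order at $\bar p$ or at other points of $x_0+y_0\s$. The paper records exactly this right after the statement: the spherical order is $2k$ at every point of the sphere \emph{with the possible exception of one point of lesser order}. Concretely, for $f(q)=((q-x_0)^2+y_0^2)^{-2}(q-q_0)$ with $q_0=x_0+y_0I$, choosing $p=q_0$ gives $m=1$, and $h^{1}f=((q-x_0)^2+y_0^2)^{-1}(q-q_0)$ is still singular on the sphere. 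Second, and more seriously, the sentence ``because $p$ is now a regular point of $g$, the affine behaviour \dots forces the entire sphere to be pole-free'' is false: $g(q)=(q^2+1)^{-1}(q-i)$ restricts on $L_i$ to $z\mapsto (z+i)^{-1}$, hence has a removable singularity at $i$, yet has a simple pole at every other point of $\s$. The representation formula constrains regular functions on symmetric neighborhoods of a sphere; it does not prevent a semiregular function from being finite at exactly one point of a spherical pole. The standard repair is: take $k$ to be the larger of the orders of the poles of the slice function $f_I$ at the two points $p$ and $\bar p$ (running your Laurent-series computation at both), so that $(h^k f)_I$ extends holomorphically to a full neighborhood of $\{p,\bar p\}$ in $L_I$; then, since a function regular on a symmetric slice domain is determined by its restriction to a single slice, the regular extension of $(h^k f)_I$ agrees with $h^k f$ off the sphere and exhibits the required regular $g$ on a symmetric slice neighborhood of all of $x_0+y_0\s$. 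With that modification your construction goes through.
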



\noindent In this case, the {\em spherical order} of the pole is $2k$ at every point of $x_0+y_0\s$ with the possible exception of one single point, where the spherical pole has lesser order.

Using the spherical series expansion \eqref{expansion2} for regular functions we can give the following Definition (see also \cite{Sing}):

\begin{definition} 
Let  $\Omega\subset \HH$ be a symmetric slice domain, let $f:\Omega \to \HH$ be a semiregular function with a pole of spherical order $2k$ at the sphere $x_0+y_0\s$, and let $q_0$ be any point of $x_0+y_0\s$. Then the {\em spherical Laurent series} of $f$ at the sphere $x_0+y_0\s$ is:
\begin{align*}
f(q)&=\sum_{j\ge 0}((q-x_0)^2+y_0^2)^{j-k}[A_{2j}+(q-q_0)A_{2j+1}]\\
&=\sum_{n\ge -k}((q-x_0)^2+y_0^2)^{n}[A_{2(n+k)}+(q-q_0)A_{2(n+k)+1}]
\end{align*}
converging in a symmetric slice open set $U(x_0+y_0\s, R)\setminus \{x_0+y_0\s\}$.
Moreover, the {\em principal part} of $f$ at the spherical pole $x_0+y_0\s$ is defined as
\[P_{x_0+y_0\s}(q)=\sum_{n= 1}^{k}((q-x_0)^2+y_0^2)^{-n}[A_{2(k-n)}+(q-q_0)A_{2(k-n)+1}].\]
\end{definition}

The use of the spherical Laurent series approach  to the Mittag-Leffler Theorem is motivated by the fact that a principal part defined using the apparently simpler regular Laurent series could vary for points of a same spherical pole $x+y\s$.

\section{The Mittag-Leffler Theorem}
We can now prove the announced result, that states that we can find a semi-regular function having prescribed poles and prescribed principal parts.
Denote by $\hat \HH$ the Alexandrov compactification of $\HH$.
\begin{teo}
Let $\Omega \subseteq {\HH} $ be a symmetric slice domain and let $S=\{x_{\alpha}+y_{\alpha}\s\}_{\alpha\in A }$ be a closed and discrete set of two dimensional spheres (or real points) contained in $\Omega$.
For every $\alpha \in A$ let $q_\alpha=x_\alpha+y_\alpha I$, with $I$ any imaginary unit,  $m(\alpha) \in \nn$ and 
\[P_{\alpha}(q)=\sum_{n=1}^{m(\alpha)}((q-x_\alpha)^2+y_\alpha^2)^{-n}[A_{2n}+(q-q_\alpha)A_{2n+1}]\]
with $A_j\in \HH$ for any $j=2, \ldots, 2m(\alpha)+1$.
Then there exists $f$ semiregular on $\Omega$ such that for every $\alpha \in A$ the principal part of $f$ at $x_\alpha+y_\alpha\s$ is $P_{\alpha}(q)$ and such that $f$ does not have other poles in $\Omega$. 
\end{teo}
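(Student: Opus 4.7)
The plan is to mimic the classical complex proof of Mittag--Leffler via a compact exhaustion and a Runge-type approximation. Two features of the slice-regular setting make this possible. First, each $P_\alpha$ is, by construction, a semiregular function on all of $\HH$ whose only pole is the sphere $x_\alpha+y_\alpha\s$: the polynomial $(q-x_\alpha)^2+y_\alpha^2$ is slice-preserving and vanishes precisely on that sphere, so all its negative powers are semiregular on $\HH$ with the required spherical pole. Second, the quaternionic Runge theorem \cite{runge} allows uniform approximation on compact symmetric subsets of $\Omega$ by functions regular on $\Omega$. Combined with the observation that adding a regular function does not alter the spherical principal part (since that part depends only on the negative-index coefficients of the spherical Laurent expansion), these ingredients are exactly what is needed to run the classical telescoping argument.

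Concretely, I would first choose a compact symmetric exhaustion $K_1\subset\mathrm{int}(K_2)\subset K_2\subset\cdots$ of $\Omega$ whose members satisfy the topological hypothesis required by \cite{runge}; this is possible because $\Omega$ is a symmetric slice domain and the axially symmetric hull of a compact subset of $\Omega$ remains compact and inside $\Omega$. Since $S$ is closed and discrete in $\Omega$, each $K_n$ meets only finitely many of its spheres, so (with $K_0=\emptyset$) the index set splits as $A=\bigsqcup_{n\ge 1}A_n$ with $A_n$ finite and $\{x_\alpha+y_\alpha\s\}_{\alpha\in A_n}\subset K_n\setminus K_{n-1}$. Set
\[
G_n(q)=\sum_{\alpha\in A_n}P_\alpha(q),
\]
which is semiregular on $\HH$ and regular on a symmetric open neighborhood of $K_{n-1}$. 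For every $n\ge 2$ invoke \cite{runge} to pick $H_n$ regular on $\Omega$ with $\sup_{K_{n-1}}|G_n-H_n|<2^{-n}$, and define
\[
f=G_1+\sum_{n\ge 2}(G_n-H_n).
\]

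To check that $f$ has the claimed properties, fix $m\ge 1$ and decompose $f=F_m+R_m$ with $F_m=G_1+\sum_{n=2}^{m+1}(G_n-H_n)$ and $R_m=\sum_{n>m+1}(G_n-H_n)$. Each summand of $R_m$ is regular on a neighborhood of $K_m$, and the series is dominated on $K_m$ by $\sum_{n>m+1}2^{-n}$, so $R_m$ is regular on $\mathrm{int}(K_m)$. The finite sum $F_m$ is semiregular on $\Omega$, its only poles inside $K_m$ being at the spheres $\{x_\alpha+y_\alpha\s\}_{\alpha\in A_1\cup\cdots\cup A_m}$ (the poles of $G_{m+1}$ lie in $K_{m+1}\setminus K_m$). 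Hence $f$ is regular on $\Omega\setminus S$ and semiregular on $\Omega$. Finally, for $\alpha\in A_{n_\alpha}$ and any $m\ge n_\alpha$, the only summand of $F_m$ contributing a pole at $x_\alpha+y_\alpha\s$ is the term $P_\alpha$ inside $G_{n_\alpha}$; since all other contributions are regular there, and the spherical principal part is invariant under regular perturbations, the principal part of $f$ at $x_\alpha+y_\alpha\s$ equals $P_\alpha$.

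The main obstacle is the Runge step, which has two sub-issues: choosing the exhaustion $\{K_n\}$ so that it verifies the topological hypothesis of the slice-regular Runge theorem (a condition on the components of $\hat\HH\setminus K_n$ relative to $\hat\HH\setminus\Omega$), and ensuring that the approximation result in \cite{runge} really does produce an $H_n$ regular on all of $\Omega$ rather than merely a rational slice function with poles outside $\Omega$. Once these are settled, the remaining verifications---uniform convergence of the telescoping tail, semiregularity of the limit, and invariance of the spherical principal part under regular perturbations---follow the classical template essentially unchanged.
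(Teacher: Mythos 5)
Your proposal follows essentially the same route as the paper: a symmetric compact exhaustion obtained by symmetrizing the classical one on a slice, grouping the finitely many spheres in each $K_n\setminus K_{n-1}$, applying the quaternionic Runge theorem of \cite{runge} with the $2^{-n}$ bound, and summing the telescoping series. The only cosmetic difference is that the paper takes the Runge approximants to be rational functions with prescribed poles outside $\Omega$ (hence regular on $\Omega$), which resolves the sub-issue you flag at the end.
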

\begin{proof}
Let $I\in \s$. Thanks to known results in the complex case (see, e.g., Theorem 13.3 in  \cite{rudin}) we can find 
a covering  $\{K^I_n\}_{n\in \nn}$ of $\Omega_I$ such that: $K^I_n$ is a compact set, $K^I_n$ is contained in the interior of $K^I_{n+1}$, every compact subset of $\Omega_I$ is contained in $K^I_n$ for some $n\in \nn$ and every connected component of $\hat{L_I}\setminus K^I_n$ contains a connected component of $\hat{L_I}\setminus \Omega_I$.
The fact that $\Omega$ is a symmetric domain yields that setting, for any $n\in \nn$, $K_n$ to be the symmetrization of $K_n^I$, we obtain a covering  of $\Omega$ such that $K_n$ is a compact set, $K_n$ is contained in the interior of $K_{n+1}$, every compact subset of $\Omega$ is contained in $K_n$ for $n$ sufficiently large, and every connected component of $\hat{\HH}\setminus K_n$ contains a connected component of $\hat{\HH}\setminus \Omega$. 
Moreover, since $\Omega$ is a slice domain, we can suppose that $K_n$ is also slice for any $n\in \nn$.
Let us set 
\[S_1:=S\cap K_1 \quad \text{ and } \quad  S_n:=S\cap(K_{n}\setminus K_{n-1}). \]
The compactness of $K_n$ guarantees that $S_n$ is a finite set of spheres (or real points).
For any $n\in \nn$ define
\[Q_n(q)=\sum_{\alpha \in S_n}P_{\alpha}(q).\]
Notice that, for every $n\in \nn$, $Q_n$ is a rational function, regular on an open neighborhood of $K_{n-1}$.
Thanks to the Runge Theorem for regular functions (see Theorem 4.10 in \cite{runge}), for any $n\in \nn$ we can find a rational function $R_n$ having (prescribed) poles outside $\Omega$ and such that
\begin{equation}\label{unif}
|R_n(q)-Q_n(q)|< 2^{-n} \quad \text{for any } q\in K_{n-1}. 
\end{equation}  
Consider now the semiregular function $f:\Omega\to \HH$ defined by
\[f(q):=Q_1(q)+\sum_{n\ge 2}(Q_n(q)-R_n(q)).\]
We aim to show that $f$ is the desired function.
Fix $N\in \nn$ and split $f$ as
\[f(q)=Q_1(q)+\sum_{n= 2}^N(Q_n(q)-R_n(q))+\sum_{n\ge N+1}(Q_n(q)-R_n(q)).\]
The last term is an infinite sum of functions which are regular in the interior of $K_{N}$. Thanks to equation \eqref{unif}, we get that it converges uniformly to a regular function on the interior of $K_{N}$ (see, e.g., \cite[Remark 3.3]{Sing}).
Hence the function
\[f(q)-Q_1(q)-\sum_{n= 2}^N(Q_n(q)-R_n(q))\]
is regular in the interior of $K_N$ as well, which means that the principal parts of $f$ at the poles contained in $K_N$ are exactly the prescribed $P_{\alpha}(q)$ for $\alpha \in  \bigcup_{n=1}^N S_n$.
Since $N$ was arbitrary, we conclude that $f$ is the desired function.   
\end{proof}

As we already noticed, unlike the case of holomorphic functions, the poles of a regular function over quaternions can be either isolated real points or isolated $2$-spheres of the form $x+y\s$. To conclude, we present { two} simple, and meaningful  { examples} of the Mittag-Leffler phenomenon in the case of semiregular functions. 
{ First we } calculate a semiregular function defined in the entire space of quaternions, such that:
 \begin{enumerate}
\item  its only poles are all the $2$-spheres $n+\s$, centered at $n\in \zz$ with radius $1$;
\item at each such sphere, the principal part is 
 \[
 P_{n+\s}(q) = ((q-n)^2+1)^{-1}
 \]
with minimum possible spherical order equal to $2$.
 \end{enumerate}
Since, for any $N\in \nn$, both 
\[
\sum_{n\ge N+1}\frac{1}{(q-n)^2+1}
\]
and 
\[
\sum_{n\ge N+1}\frac{1}{(q+n)^2+1}
\]
converge uniformly to a regular function inside the open ball centered at the origin and having radius $N$, we get that the function
\[
f(q)= \sum_{n\in \zz}\frac{1}{(q-n)^2+1}
\]
is the desired semiregular function.

{
A second example, peculiar to the quaternionic setting, is that of a semiregular function having infinitely many spheres of poles with spherical order $2$ at each point, except for one point (on every sphere) which has lesser order.  
Namely we want to calculate a semiregular function, defined on the entire space of quaternions, such that:
\begin{enumerate}
	\item  its only poles are all the $2$-spheres $n+\s$, centered at $n\in \zz$ with radius $1$;
	\item at each such sphere, the principal part is 
	\[
	P_{n+\s}(q) = ((q-n)^2+1)^{-1}(q-n-i)
	\]

\end{enumerate}
In this case it is immediate to see that the series
\[\sum_{n \in \zz}P_{n+\s}(q)=\sum_{n \in \zz}\frac{q-n-i}{((q-n)^2+1)}\]
does not converge and hence does not define a semiregular function on $\HH$. However if we sum up the two terms
\[\frac{q-n-i}{((q-n)^2+1)}+ \frac{q+n-i}{((q+n)^2+1)}=\frac{2(q^3-q^2i+q(1-n^2)-(n^2+1)i)}{((q+n)^2+1)((q-n)^2+1)}\]
we get, arguing as in the first example, that
\[\frac{q-i}{q^2+1}+\sum_{n \ge 1}\frac{2(q^3-q^2i+q(1-n^2)-(n^2+1)i)}{((q+n)^2+1)((q-n)^2+1)}\]
 defines the semiregular function we were looking for. 
}

\end{document}